\theoremstyle{plain}
\newtheorem{theorem}{Theorem}
\newtheorem{lemma}{Lemma}
\theoremstyle{definition}
\newtheorem{example}{Example}
\newtheorem{remark}{Remark}
\newcommand{\reff}[1]{(\ref{#1})}
\newcommand{\mailto}[1]{\href{mailto:#1}{\nolinkurl{#1}}}
\newcommand{\tsum}{\textstyle\sum}
\newcommand{\Z}{\mathbb{Z}}
\newcommand{\E}{\operatorname{E}}
\newcommand{\XLB}{X^{\rm{LB}}}
\newcommand{\Sprod}{S_1 \times S_2}
\newcommand{\Rst}{R_{\rm{st}}}
\newcommand{\simst}{\sim_{\rm{st}}}
\newcommand{\lest}{\le_{\rm{st}}}
\newcommand{\RSum}{R^{\rm{sum}}}
\newcommand{\RCoord}{R^{\rm{coord}}}
\newcommand{\RMonoCoord}{R^{\rm{coord}}_1}
\newcommand{\RMonoSum}{R^{\rm{sum}}_1}
\newcommand{\RMonoMin}{R^{\rm{min}}_1}
\begin{document}

\title{Computational methods for stochastic \\ relations and Markovian couplings}

\author{
 Lasse Leskelä\thanks{
 Postal address:
 Department of Mathematics and Systems Analysis,
 Helsinki University of Technology,
 PO Box 1100, 02015 TKK, Finland.
 Tel: +358 9 451 3040.
 URL: \url{http://www.iki.fi/lsl/} \quad
 Email: \protect\mailto{lasse.leskela@iki.fi}}
}
\date{31 May 2009}
\maketitle

\maketitle

\begin{abstract}
Order-preserving couplings are elegant tools for obtaining robust estimates of the time-dependent
and stationary distributions of Markov processes that are too complex to be analyzed exactly. The
starting point of this paper is to study stochastic relations, which may be viewed as natural
generalizations of stochastic orders. This generalization is motivated by the observation that for
the stochastic ordering of two Markov processes, it suffices that the generators of the processes
preserve some, not necessarily reflexive or transitive, subrelation of the order relation. The
main contributions of the paper are an algorithmic characterization of stochastic relations
between finite spaces, and a truncation approach for comparing infinite-state Markov processes.
The methods are illustrated with applications to loss networks and parallel queues.
\end{abstract}

\noindent {\bf Keywords:} Markovian coupling, stochastic comparison, stochastic
order, stochastic relation

\vspace{1ex}

\noindent {\bf AMS 2000 Subject Classification:} 60B99, 60E15, 60J25

\section{Introduction}
Comparison techniques based on stochastic orders~\cite{muller2002,shaked2007,szekli1995} are key
to obtaining upper and lower bounds for complicated random variables and processes in terms of
simpler random elements. Consider for example two ergodic discrete-time Markov processes $X$ and
$Y$ with stationary distributions $\mu_X$ and $\mu_Y$, taking values in a common ordered state
space, and denote by $\lest$ the corresponding stochastic order. Then the upper bound
\begin{equation}
 \label{eq:stationaryOrder}
 \mu_X \lest \mu_Y
\end{equation}
can be established~\cite{kamae1977} without explicit knowledge of $\mu_X$ by verifying that the
corresponding transition probability kernels $P_X$ and $P_Y$ satisfy
\begin{equation}
 \label{eq:kernelOrder}
 x \le y \implies P_X(x,\cdot) \lest P_Y(y,\cdot).
\end{equation}
Analogous conditions for continuous-time Markov processes on countable spaces have been derived by
Whitt~\cite{whitt1986} and Massey~\cite{massey1987}, and later extended to more general jump
processes by Brandt and Last~\cite{brandt1994}.

Less stringent sufficient conditions for obtaining~\reff{eq:stationaryOrder} have recently been
found using a new theory stochastic relations~\cite{leskelaNow}. Two random variables are
stochastically related, denoted by $X \simst Y$, if there exists a coupling $(\hat X, \hat Y)$ of
$X$ and $Y$ such that $\hat X \sim \hat Y$ almost surely, where $\sim$ denotes some relation
between the state spaces of $X$ and $Y$. The main motivation for this definition is
that~\reff{eq:kernelOrder} is by no means necessary for~\reff{eq:stationaryOrder}; a less
stringent sufficient condition is that
\begin{equation}
 \label{eq:kernelRelation}
 x \sim y \implies P_X(x,\cdot) \simst P_Y(y,\cdot)
\end{equation}
for some, not necessarily symmetric or transitive, nontrivial subrelation of the underlying order
relation. Another advantage of the generalized definition is that $X$ and $Y$ are no longer
required to take values in the same state space, leading to greater flexibility in the search for
bounding random elements $Y$. For example, to study whether $f(X) \lest g(Y)$ for some given real
functions $f$ and $g$ defined on the state spaces of $X$ and $Y$, we may define a relation $x \sim
y$ by the condition $f(x) \le g(y)$ \cite{doisy2000}.

The rest of the paper is outlined as follows. After recalling the basic definitions,
Section~\ref{sec:StochasticRelations} presents a numerical algorithm for verifying stochastic
relations between finite spaces, together with an analysis of computational complexity.
Section~\ref{sec:MarkovProcesses} recalls how a recursive subrelation algorithm may be used to
find Markovian couplings preserving a given relation. In Section~\ref{sec:Truncation}, a new
truncation approach is presented that allows to precisely compute truncated outcomes of the
subrelation algorithm for infinite-state Markov processes with locally bounded jumps.
Section~\ref{sec:Applications} discusses applications to loss networks and parallel queues, and
Section~\ref{sec:Conclusions} concludes the paper.

\section{Stochastic relations}
\label{sec:StochasticRelations}

\subsection{General definitions}

We shall here recall the definitions of stochastic relations between countable spaces. Probability
measures $\mu$ on a countable state space $S$ shall be viewed as a probability vectors via
identifying $\mu(x) = \mu(\{x\})$. For a treatment on more general spaces, see~\cite{leskelaNow}.

A \emph{relation} between $S_1$ and $S_2$ is subset of $\Sprod$. Given a nontrivial ($R \neq
\emptyset$) relation $R$ between $S_1$ and $S_2$, we write
\[
x \sim y,
\]
if $(x,y) \in R$. The relation $R$ may equivalently be viewed as a matrix so that $R(x,y) = 1$ if
$x \sim y$ and $R(x,y) = 0$ otherwise. A \emph{coupling} of probability vectors $\mu$ on $S_1$ and
$\nu$ on $S_2$ is a probability vector $\lambda$ on $S_1 \times S_2$ with marginals $\mu$ and
$\nu$, that is,
\begin{align*}
 \sum_{y \in S_2} \lambda(x,y) &= \mu(x) \quad \text{for all $x \in S_1$},\\
 \sum_{x \in S_1} \lambda(x,y) &= \nu(y) \quad \text{for all $y \in S_2$}.
\end{align*}
For probability vectors $\mu$ on $S_1$ and $\nu$ on $S_2$ we denote
\[
\mu \simst \nu,
\]
and say that $\mu$ is stochastically related to $\nu$, if there exists a coupling $\lambda$ of
$\mu$ and $\nu$ such that
\[
 \sum_{(x,y) \in R} \mu(x,y) = 1.
\]
The relation $\Rst = \{ (\mu,\nu) : \mu \simst \nu \}$ is called the \emph{stochastic relation}
generated by $R$. Observe that two Dirac masses satisfy $\delta_{x} \simst \delta_{y}$ if and only
if $x \sim y$. In this way the stochastic relation $\Rst$ may be regarded as a natural
randomization of the underlying relation $R$.

The following result in~\cite{leskelaNow}, which is rephrased here for ease of reference, provides
an analytical method to check whether a pair of probability measures are stochastically related.

\begin{theorem}\cite{leskelaNow}
\label{the:StochasticRelation}
Two probability vectors $\mu$ and $\nu$ are stochastically related with respect to $R$ if and only
if
\begin{equation}
 \label{eq:StochasticRelation1}
 \sum_{x \in U} \mu(x) \le \sum_{y \in S_2} \left( \max_{x \in U} R(x,y) \right) \nu(y)
\end{equation}
for all finite $U \subset S_1$, or equivalently, if and only if
\begin{equation}
 \label{eq:StochasticRelation2}
 \sum_{y \in V} \nu(y) \le \sum_{x \in S_1} \left( \max_{y \in V} R(x,y) \right) \mu(x)
\end{equation}
for all finite $V \subset S_2$.
\end{theorem}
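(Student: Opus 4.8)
The plan is to read \reff{eq:StochasticRelation1} as a Hall- (or Strassen-) type feasibility criterion for the existence of a coupling concentrated on $R$. Write $R(U)=\{\,y\in S_2: x\sim y\text{ for some }x\in U\,\}$ for the image of a set $U\subset S_1$, and $R^{-1}(V)=\{\,x\in S_1: x\sim y\text{ for some }y\in V\,\}$ for the preimage of $V\subset S_2$. Then $\max_{x\in U}R(x,y)=\mathbf 1\{y\in R(U)\}$, so \reff{eq:StochasticRelation1} is just the assertion that $\mu(U)\le\nu(R(U))$ for every finite $U\subset S_1$, and likewise \reff{eq:StochasticRelation2} is the assertion that $\nu(V)\le\mu(R^{-1}(V))$ for every finite $V\subset S_2$. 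Exchanging coordinates turns a coupling of $(\mu,\nu)$ concentrated on $R$ into a coupling of $(\nu,\mu)$ concentrated on the transposed relation $R^{\mathrm T}\subset S_2\times S_1$, and conversely; moreover \reff{eq:StochasticRelation2} written for $(R,\mu,\nu)$ is exactly \reff{eq:StochasticRelation1} written for $(R^{\mathrm T},\nu,\mu)$. Hence the second equivalence follows from the first, and it suffices to prove that $\mu\simst\nu$ with respect to $R$ holds if and only if \reff{eq:StochasticRelation1} holds.

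Necessity is a direct computation. If $\lambda$ is a coupling of $\mu$ and $\nu$ with $\sum_{(x,y)\in R}\lambda(x,y)=1$, then $\lambda(x,y)=0$ whenever $(x,y)\notin R$, so for any finite $U\subset S_1$,
\[
 \mu(U)=\sum_{x\in U}\sum_{y\in S_2}\lambda(x,y)=\sum_{\substack{(x,y)\in R\\ x\in U}}\lambda(x,y)\le\sum_{y\in R(U)}\sum_{x\in S_1}\lambda(x,y)=\nu(R(U)),
\]
where the inequality is the set inclusion $\{(x,y)\in R:x\in U\}\subset S_1\times R(U)$.

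For sufficiency when $S_1$ and $S_2$ are finite, I would phrase it as a max-flow problem. Introduce a source $s$, a sink $t$, arcs $s\to x$ of capacity $\mu(x)$ for $x\in S_1$, arcs $y\to t$ of capacity $\nu(y)$ for $y\in S_2$, and arcs $x\to y$ of capacity $+\infty$ for every pair with $x\sim y$. The middle part of an $s$--$t$ flow of value $1$ is exactly a coupling of $\mu$ and $\nu$ concentrated on $R$ (such a flow necessarily saturates all source and all sink arcs), and conversely; so by the max-flow--min-cut theorem it suffices to show that every cut has capacity at least $1$. A cut of finite capacity has source side $\{s\}\cup A\cup B$ for some $A\subset S_1$ and $B\subset S_2$ with $R(A)\subset B$, and its capacity equals $\bigl(1-\mu(A)\bigr)+\nu(B)\ge 1-\mu(A)+\nu(R(A))\ge 1$ by \reff{eq:StochasticRelation1}; the cut $A=B=\emptyset$ attains the value $1$, so the minimum cut, and therefore the maximum flow, equals $1$, which produces the desired coupling.

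The case of countably infinite $S_1,S_2$ I would deduce from the finite case by truncation: exhaust $S_1$ and $S_2$ by increasing finite subsets, solve a finite version of the feasibility problem on each, and pass to a pointwise limit point of the resulting couplings inside the compact space $[0,1]^{S_1\times S_2}$. The step I expect to demand the most care is checking that this limit $\lambda$ is a true coupling of $\mu$ and $\nu$ concentrated on $R$, and not merely a sub-probability measure that has lost mass at infinity. Here one uses that probability vectors on countable sets are automatically tight: Fatou's lemma gives $\lambda(\{x\}\times S_2)\le\mu(x)$ and $\lambda(S_1\times\{y\})\le\nu(y)$, tightness forces $\lambda$ to have total mass $1$, and then the marginals must equal $\mu$ and $\nu$, while the property of vanishing off $R$ passes to the limit trivially. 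This final point is the countable-space instance of Strassen's coupling theorem.
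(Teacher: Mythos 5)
First, note that this paper does not prove Theorem~\ref{the:StochasticRelation} at all: it is quoted from the reference \cite{leskelaNow}, where it is established for general (Polish) spaces by a Strassen-type duality argument. Your proof is therefore necessarily a different route --- a self-contained, more elementary argument specialized to countable spaces --- and most of it is sound. The reduction of \reff{eq:StochasticRelation2} to \reff{eq:StochasticRelation1} by transposing the relation, the necessity computation, and the finite-space sufficiency via max-flow--min-cut (a finite-capacity cut with source side $\{s\}\cup A\cup B$ forces $R(A)\subset B$, so its capacity is at least $1-\mu(A)+\nu(R(A))\ge 1$) are all correct and complete.

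The one genuine gap is in the countable case, and it is not quite the point you flag. You say ``solve a finite version of the feasibility problem on each'' exhausting pair $S_1^n\times S_2^n$, but condition \reff{eq:StochasticRelation1} for $(\mu,\nu,R)$ does \emph{not} pass to the naive restrictions: for $U\subset S_1^n$ the image $R(U)$ may carry much of its $\nu$-mass outside $S_2^n$, so the restricted measures can violate the Hall condition and the finite max-flow problem may be infeasible, with or without renormalization. The standard repair is to add cemetery states: put the mass $\mu(S_1\setminus S_1^n)$ on a point $\partial_1$ and $\nu(S_2\setminus S_2^n)$ on a point $\partial_2$, and declare $\partial_1$ and $\partial_2$ related to everything. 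Then $\nu_n(R_n(U))\ge \nu(R(U)\cap S_2^n)+\nu(S_2\setminus S_2^n)\ge\nu(R(U))\ge\mu(U)$, so your finite theorem applies and yields couplings $\lambda_n$ whose defect (mass on pairs involving $\partial_1,\partial_2$) is at most $\mu(S_1\setminus S_1^n)+\nu(S_2\setminus S_2^n)\to 0$. After this correction, the limit argument you outline does work, and the tightness point is handled exactly as you suggest: for finite $A,B$ with $\mu(A),\nu(B)>1-\varepsilon$, the couplings place mass at least $1-2\varepsilon-o(1)$ on the finite set $A\times B$, so the pointwise limit cannot lose mass, Fatou gives the marginal domination, equality of total masses upgrades it to equality of marginals, and vanishing off $R$ is preserved pointwise. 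So the architecture is right and the argument is fixable as written, but as stated the truncation step is where the proof does not yet go through.
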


A random variable $X$ is \emph{stochastically related} to a random variable $Y$, denoted by $X
\simst Y$, if the distribution of $X$ is stochastically related to the distribution of $Y$.
Observe that $X$ and $Y$ do not need to be defined on the same probability space. Recall that a
coupling of random variables $X$ and $Y$ is a bivariate random variable whose distribution couples
the distributions of $X$ and $Y$. Hence $X \simst Y$ if and only if there exists a coupling $(\hat
X, \hat Y)$ of $X$ and $Y$ such that $\hat X \sim \hat Y$ almost surely.

\begin{example}
If $\le$ is an order (reflexive and transitive) relation on a space $S$, then the corresponding
stochastic relation $\lest$ is called a stochastic order. Using Strassen's classical
theorem~\cite{muller2002}, we see that $X \lest Y$ if and only if $\E f(X) \le \E f(Y)$ for all
positive increasing functions $f$ on $S$.
\end{example}

\subsection{Stochastic relations between finite spaces}

Let $R$ be a relation between finite spaces $S_1$ and $S_2$, and denote by $\Rst$ the
corresponding stochastic relation. Then Theorem~\ref{the:StochasticRelation} may be used to
determine whether $\mu \simst \nu$. However, this requires to check the
inequality~\reff{eq:StochasticRelation1} for all subsets of $S_1$, which is computationally
infeasible unless the spaces are small. The following result shows that less checks may be
sufficient. We shall denote the support of a probability vector $\mu$ by $U_\mu = \{x: \mu(x) >
0\}$. Moreover, we denote by $F(U,\Z_2)$ the set of vectors with components in $\{0,1\}$ indexed
by elements of $U$, which may also be identified as the set of all subsets of $U$.

\begin{theorem}
\label{the:StochasticRelationFinite}
Two probability vectors $\mu$ and $\nu$ with supports $U_\mu$ and $U_\nu$ are stochastically
related with respect to $R$ if and only if
\begin{equation}
 \label{eq:StochasticRelationFinite1}
 \sum_{x \in U_\mu} f(x) \, \mu(x) \le \sum_{y \in U_\nu} \max_{x \in U_\mu} \left[ f(x) R(x,y) \right] \nu(y)
\end{equation}
for all $f \in F(U_\mu,\Z_2)$, or equivalently, if and only if
\begin{equation}
 \label{eq:StochasticRelationFinite2}
 \sum_{y \in U_\nu} g(y) \, \nu(y) \le \sum_{x \in U_\mu} \max_{y \in U_\nu} \left[ R(x,y) g(y) \right] \mu(x)
\end{equation}
for all $g \in F(U_\nu,\Z_2)$.
\end{theorem}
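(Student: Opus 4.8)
The plan is to deduce Theorem~\ref{the:StochasticRelationFinite} from Theorem~\ref{the:StochasticRelation} by showing that the seemingly weaker family of inequalities \reff{eq:StochasticRelationFinite1}, indexed by $f \in F(U_\mu,\Z_2)$, is in fact equivalent to the full family \reff{eq:StochasticRelation1}, indexed by all finite $U \subset S_1$. Since $S_1$ is finite here, every $U \subset S_1$ is automatically finite, so the content is that (i) we may restrict to subsets of the support $U_\mu$, and (ii) we may rewrite \reff{eq:StochasticRelation1} for a subset $U \subset U_\mu$ in the ``weighted'' form \reff{eq:StochasticRelationFinite1} by taking $f = \mathbf{1}_U$. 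Observe first that for $f = \mathbf{1}_U$ with $U \subset U_\mu$, the left side $\sum_{x \in U_\mu} f(x)\mu(x) = \sum_{x\in U}\mu(x) = \sum_{x \in S_1}\mu(x)\mathbf{1}_U(x)$, and on the right side $\max_{x \in U_\mu}[f(x)R(x,y)] = \max_{x \in U}R(x,y)$ (interpreting an empty maximum as $0$, which matches $f\equiv 0$), so \reff{eq:StochasticRelationFinite1} for the indicator $\mathbf{1}_U$ is literally \reff{eq:StochasticRelation1} for the set $U$. Since $F(U_\mu,\Z_2)$ is exactly the set of indicators of subsets of $U_\mu$, the family \reff{eq:StochasticRelationFinite1} coincides with the subfamily of \reff{eq:StochasticRelation1} ranging over $U \subset U_\mu$.

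It then remains to show that verifying \reff{eq:StochasticRelation1} for all $U \subset U_\mu$ already implies it for all $U \subset S_1$. For an arbitrary $U \subset S_1$, split $U = U' \cup U''$ with $U' = U \cap U_\mu$ and $U'' = U \setminus U_\mu$. The left side of \reff{eq:StochasticRelation1} is unchanged by dropping $U''$, since $\mu(x) = 0$ for $x \in U''$: $\sum_{x \in U}\mu(x) = \sum_{x \in U'}\mu(x)$. For the right side, monotonicity of the maximum gives $\max_{x \in U}R(x,y) \ge \max_{x \in U'}R(x,y)$ for every $y$, and since $\nu(y) \ge 0$, the right side for $U$ dominates the right side for $U'$. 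Hence \reff{eq:StochasticRelation1} for $U'$ implies it for $U$, and the equivalence of \reff{eq:StochasticRelationFinite1} with the full hypothesis of Theorem~\ref{the:StochasticRelation} follows. The equivalence of \reff{eq:StochasticRelationFinite2} with \reff{eq:StochasticRelation2} is proved by the identical argument with the roles of $S_1$ and $S_2$, of $\mu$ and $\nu$, and of $U_\mu$ and $U_\nu$ interchanged; and since Theorem~\ref{the:StochasticRelation} already asserts that \reff{eq:StochasticRelation1} and \reff{eq:StochasticRelation2} are each equivalent to $\mu \simst \nu$, both halves of the present theorem follow.

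I do not expect a serious obstacle here; the argument is a short bookkeeping reduction. The only points demanding a little care are the convention for the empty maximum (which must be taken to be $0$ so that the $f \equiv 0$ case reads $0 \le 0$ and so that the indicator rewriting is exact even when $U = \emptyset$), and making sure the ``drop the complement of the support'' step is applied on the correct side of each inequality — on the left it is an equality because $\mu$ vanishes there, while on the right it is a genuine inequality in the favourable direction because enlarging the index set of a maximum can only increase it and $\nu \ge 0$. I would also remark that this is precisely the observation that drives the computational savings discussed in the surrounding text: one checks $2^{|U_\mu|}$ (or $2^{|U_\nu|}$, whichever support is smaller) inequalities rather than $2^{|S_1|}$.
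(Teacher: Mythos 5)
Your argument is correct and is essentially the paper's own proof: identify elements of $F(U_\mu,\Z_2)$ with indicators of subsets of $U_\mu$, and for the converse replace an arbitrary $U \subset S_1$ by $U \cap U_\mu$, using that $\mu$ vanishes off its support on the left and that enlarging the index set of the maximum only increases the right side (the sum over $U_\nu$ versus $S_2$ being immaterial since $\nu$ vanishes off $U_\nu$). The symmetric case \reff{eq:StochasticRelationFinite2} is handled in the same way in both proofs, so nothing further is needed.
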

\begin{proof}
In light of Theorem~\ref{the:StochasticRelation}, it suffices to show the equivalence
of~\reff{eq:StochasticRelation1} and~\reff{eq:StochasticRelationFinite1}, and the equivalence
of~\reff{eq:StochasticRelation2} and~\reff{eq:StochasticRelationFinite2}. Observe
that~\reff{eq:StochasticRelation1} directly implies~\reff{eq:StochasticRelationFinite1}, because
the members of $F(U_\mu,\Z_2)$ may be identified with the indicator functions of subsets of
$U_\mu$. To prove the converse, assume that~\reff{eq:StochasticRelationFinite1} holds, and let $U$
be an arbitrary subset of $S_1$. Define $f(x) = 1(x \in U \cap U_\mu)$. Then
\begin{align*}
 \sum_{x \in U} \mu(x)
 &= \sum_{x \in U_\mu} f(x) \mu(x) \\
 &\le \sum_{y \in U_\nu} \max_{x \in U_\mu} \left[ f(x) R(x,y) \right] \nu(y) \\
 &= \sum_{y \in S_2}   \left( \max_{x \in U_\mu \cap U} R(x,y) \right) \nu(y) \\
 &\le \sum_{y \in S_2} \left( \max_{x \in U} R(x,y) \right) \nu(y).
\end{align*}
Hence~\reff{eq:StochasticRelation1} holds. Proving the equivalence
of~\reff{eq:StochasticRelation2} and~\reff{eq:StochasticRelationFinite2} is completely analogous.
\end{proof}

\begin{algorithm}
\caption{Determining whether $\mu \simst \nu$.}
\begin{algorithmic}
 \STATE $U_\mu \gets \{x \in S_1: \mu(x)>0\}$
 \STATE $U_\nu \gets \{y \in S_2: \nu(y)>0\}$

 \IF {$\# U_\mu > \# U_\nu$}
    \STATE flip $\mu \leftrightarrow \nu$, $U_\mu \leftrightarrow U_\nu$, $S_1 \leftrightarrow S_2$
 \ENDIF

 \STATE $b \gets$ true
 \FOR {$i = 1,\dots,2^{\# U_\mu}$}
   \STATE $f \gets$ $i$-th vector in $F(U_\mu,\Z_2)$
   \STATE $v_l \gets \sum_{x \in U_\mu} f(x) \, \mu(x)$
   \STATE $v_r \gets \sum_{y \in U_\nu} \left[ \max_{x \in U_\mu} f(x) R(x,y) \right] \nu(y)$
       \IF {$v_l > v_r$}
           \STATE $b \gets$ false
           \STATE break
        \ENDIF
  \ENDFOR
  \RETURN $b$
\end{algorithmic}
\label{alg:StochasticRelation}
\end{algorithm}

Algorithm~\ref{alg:StochasticRelation} describes how Theorem~\ref{the:StochasticRelationFinite}
can be applied to numerically determine whether $\mu \simst \nu$. The interchange of the variables
in the beginning corresponds to using~\reff{eq:StochasticRelationFinite1} if the support of $\mu$
is smaller than $\nu$, and~\reff{eq:StochasticRelationFinite2} otherwise. Inspection of
Algorithm~\ref{alg:StochasticRelation} shows that the computational complexity of determining
whether $\mu \simst \nu$ is of the order
\[
 O( \max(n_1',n_2') 2^{\min(n_1',n_2')}),
\]
where $n_1'$ and $n_2'$ denote the cardinalities of the supports of $\mu$ and $\nu$. The algorithm
is very slow when both state spaces are large and $\mu$ and $\nu$ have positive mass in all
states. However, in many applications related to structured Markov chains we may assume that the
$\mu$ and $\nu$ have small supports (see Section~\ref{sec:Applications}).

\begin{remark}
The verification of $\mu \simst \nu$ can be carried out faster, if the underlying relation $R$ has
some structure that can be employed. For example, for the natural order on $S=\{1,\dots,n\}$, $\mu
\lest \nu$ can be verified in $O(n^2)$ time by checking whether $\mu K \le \nu K$ holds
coordinatewise, where $K$ is the $n$-by-$n$ lower triangular matrix such that $K(i,j) = 1(i \ge
j)$ \cite{benmamoun2007}.
\end{remark}

\section{Markov processes}
\label{sec:MarkovProcesses}

\subsection{Markovian couplings}

All state spaces in the following shall be assumed finite or countably infinite. The keep the
presentation short, all results shall be formulated for continuous-time Markov processes, which
without further mention shall be assumed nonexplosive. For a Markov process $X$ with values in $S$
we denote by $X(x,t)$ the value of the process at time $t$ given that was started at state $x$. A
Markov process $\hat X = (\hat X_1,\hat X_2)$ taking values in $\Sprod$ is called a
\emph{Markovian coupling} of $X_1$ and $X_2$ if $\hat X(x,t)$ couples $X_1(x_1,t)$ and
$X_2(x_2,t)$ for all $t$ and all $x = (x_1,x_2)$. A common approach for showing that the
time-dependent distributions of two Markov processes $X_1$ and $X_2$ are stochastically ordered,
is to find a Markovian coupling $\hat X = (\hat X_1,\hat X_2)$ of $X_1$ and $X_2$ such that
\begin{equation}
 \label{eq:OrderPreservation}
 x_1 \le x_2 \implies \hat X_1(x_1,t) \le \hat X_2(x_2,t)
\end{equation}
almost surely for all $t$~\cite{muller2002}. Observe that if~\reff{eq:OrderPreservation} holds,
then for all $t$ and all increasing functions $f$,
\begin{align*}
 \E f(X_1(x_1,t))
 &=   \E f(\hat X_1(x_1,t)) \\
 &\le \E f(\hat X_2(x_2,t))
  = \E f(X_2(x_2,t)),
\end{align*}
whenever $x_1 \le x_2$. Hence $X_1(x_1,t) \lest X_2(x_2,t)$. If both $X_1$ and $X_2$ have unique
stationary distributions, it follows by taking limits that the stationary distributions are
stochastically ordered.

Observe that~\reff{eq:OrderPreservation} is not necessary for the stochastic ordering of the
stationary distributions of $X_1$ and $X_2$. To formulate a less stringent sufficient condition,
we shall use the following definitions. Let $R$ be a relation between $S_1$ and $S_2$. A pair of
Markov processes $X_1$ in $S_1$ and $X_2$ in $S_2$ is said to \emph{stochastically preserve the
relation $R$}, if
\[
 x_1 \sim x_2 \implies X_1(x_1,t) \simst X_2(x_2,t) \quad \text{for all $t \ge 0$}.
\]
Moreover, a set $B$ is called \emph{invariant} for a Markov process $X$ if $x \in B$ implies
$X(x,t) \in B$ for all $t$ almost surely. The following results was proved in~\cite{leskelaNow},
which we rephrase here for convenience.
\begin{theorem}\cite{leskelaNow}
The following are equivalent:
\begin{enumerate}[(i)]
\item $X_1$ and $X_2$ stochastically preserve the relation $R$.
\item There exists a Markovian coupling of $X_1$ and $X_2$ for which $R$ is invariant.
\end{enumerate}
\end{theorem}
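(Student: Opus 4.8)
The plan is to prove the two implications separately, with the direction (ii) $\Rightarrow$ (i) being essentially immediate and (i) $\Rightarrow$ (ii) requiring the real work. For (ii) $\Rightarrow$ (i): suppose $\hat X = (\hat X_1, \hat X_2)$ is a Markovian coupling of $X_1$ and $X_2$ for which $R$ is invariant. Fix $x_1 \sim x_2$, i.e.\ $(x_1,x_2) \in R$. Since $R$ is invariant and $\hat X((x_1,x_2),0) = (x_1,x_2) \in R$, we get $\hat X((x_1,x_2),t) \in R$ almost surely for every $t$, that is $\hat X_1(x_1,t) \sim \hat X_2(x_2,t)$ a.s. But $(\hat X_1(x_1,t), \hat X_2(x_2,t))$ is by definition a coupling of $X_1(x_1,t)$ and $X_2(x_2,t)$, so this exhibits precisely the coupling witnessing $X_1(x_1,t) \simst X_2(x_2,t)$. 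Hence (i) holds.

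For (i) $\Rightarrow$ (ii), the idea is to build the coupling generator pointwise on $R$ and then appeal to a standard construction of a Markov process from a conservative generator. Concretely, let $Q_1$ and $Q_2$ be the generators of $X_1$ and $X_2$. The goal is, for each $(x_1,x_2) \in R$, to specify a coupled jump rate kernel $\hat Q((x_1,x_2), \cdot)$ on $S_1 \times S_2$ whose marginals are $Q_1(x_1,\cdot)$ and $Q_2(x_2,\cdot)$ and which is supported (off the diagonal direction, in the sense of where it moves mass) inside $R$ — i.e.\ from a state in $R$ the coupled process only jumps to other states in $R$. The natural route is: first translate the hypothesis (i) into a statement about generators. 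Since $X_1(x_1,t) \simst X_2(x_2,t)$ for all small $t$ and $\simst$ is generated by $R$, one differentiates at $t = 0$ (or uses the infinitesimal characterization of stochastic relations for Markov generators from \cite{leskelaNow}) to conclude that the generators ``preserve $R$'' in the appropriate infinitesimal sense. Then Theorem~\ref{the:StochasticRelation} (applied to the relevant sub-probability or rate vectors, after the usual uniformization / adding a holding term) produces, for each fixed $(x_1,x_2) \in R$, a coupling measure concentrated on $R$; this is exactly a selection problem solved by the max-flow/Strassen-type inequality in Theorem~\ref{the:StochasticRelation}. Assembling these fibrewise couplings into a single conservative generator $\hat Q$ on $S_1 \times S_2$ (extending arbitrarily, e.g.\ by the independent coupling, on states outside $R$) and invoking nonexplosiveness plus the standard existence theorem for countable-state Markov processes yields the Markovian coupling $\hat X$; invariance of $R$ is built in by construction since no transition rate leads out of $R$.

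The main obstacle is the measurable-selection / consistency step: producing the fibrewise couplings from Theorem~\ref{the:StochasticRelation} is fine for each individual pair $(x_1,x_2)$, but one must check that the resulting rate kernel is genuinely a conservative $Q$-matrix (rows summing to zero, all off-diagonal entries nonnegative and finite) and that the coupled process it generates is nonexplosive — the latter is where the standing nonexplosiveness assumption on $X_1,X_2$ must be leveraged, typically by dominating the coupled process's jump activity by the sum of the marginal activities. A secondary subtlety is the passage from the integrated hypothesis ``$X_1(x_1,t) \simst X_2(x_2,t)$ for all $t \ge 0$'' to an infinitesimal generator condition; this should follow by the same differentiation argument used to derive kernel-level conditions in \cite{whitt1986,massey1987,brandt1994}, together with Theorem~\ref{the:StochasticRelation}, and is presumably the content being imported from \cite{leskelaNow}. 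I would therefore structure the proof as: (a) the easy direction as above; (b) reduce (i) to an infinitesimal $R$-preservation condition on $(Q_1,Q_2)$; (c) for each $(x_1,x_2) \in R$ apply Theorem~\ref{the:StochasticRelation} to obtain a coupling kernel concentrated on $R$; (d) verify conservativity and nonexplosiveness of the assembled generator; (e) conclude that the generated process is the desired Markovian coupling with $R$ invariant.
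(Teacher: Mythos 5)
Your direction (ii) $\Rightarrow$ (i) is complete and correct, and your overall plan for (i) $\Rightarrow$ (ii) is the natural one --- it is exactly the generator-level route that underlies the machinery summarized in Section~\ref{sec:MarkovProcesses} (build fibrewise couplings of the uniformized one-step measures $\mu_{x,y}$, $\nu_{x,y}$ of \reff{eq:mu}--\reff{eq:nu} concentrated on $R$, assemble them into a coupled rate matrix, and let $R$ be absorbing by construction). Note, however, that the paper itself gives no proof: the theorem is imported verbatim from \cite{leskelaNow}, so the only question is whether your sketch would stand on its own, and as written it has two genuine gaps. First, the reduction of (i) to the infinitesimal condition ``$(\mu_{x,y},\nu_{x,y}) \in \Rst$ for all $(x,y) \in R$'' (i.e.\ $M(R)=R$) is asserted rather than proved, and you cannot discharge it by saying it is ``presumably the content being imported from \cite{leskelaNow}'' --- that is precisely the theorem you are supposed to prove, and Theorem~\ref{the:maximal}, which records this equivalence, is itself a consequence of it in the paper's logical organization. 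The differentiation argument does work on countable spaces, but it needs to be carried out: at $t=0$ the two sides of \reff{eq:StochasticRelation1} applied to $P^1_t(x_1,\cdot)$ and $P^2_t(x_2,\cdot)$ are equal when $x_1 \in U$, so one compares right derivatives, and one must justify differentiating the (possibly infinite) sum on the right-hand side term by term using the backward equations and $\sum_y P^2_t(x_2,y)=1$; none of this appears in your sketch.

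Second, your step (d)--(e) is thinner than you acknowledge. On countable spaces the ``measurable selection'' worry you raise is vacuous (one simply picks a coupling for each of countably many pairs), but the real work is elsewhere: having a conservative rate matrix $\hat Q$ whose off-diagonal row marginals agree with $Q_1(x_1,\cdot)$ and $Q_2(x_2,\cdot)$ does not by itself show that the process it generates is a Markovian coupling in the paper's sense, i.e.\ that $\hat X((x_1,x_2),t)$ has marginal laws exactly $X_1(x_1,t)$ and $X_2(x_2,t)$ for every $t$. You need (a) nonexplosiveness of the coupled chain, which follows from the observation that every jump of $\hat X$ changes at least one coordinate together with nonexplosiveness of $X_1$ and $X_2$ --- not from a vague ``domination of jump activity,'' since the total rate $q_{x,y}$ exceeds $q_1(x)+q_2(y)$ is false but it is bounded by $q_1(x)+q_2(y)$ after removing the holding mass, and this must be argued; and (b) an identification of the marginal laws, e.g.\ via the Kolmogorov equations or uniqueness of the minimal process, using that the marginal jump rates out of $(x_1,x_2)$ depend only on the relevant coordinate. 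With those two points filled in, and the holding term $\delta_x$, $\delta_y$ in \reff{eq:mu}--\reff{eq:nu} guaranteeing that the fibrewise coupling can place mass on ``no jump'' while all actual jump targets lie in $R$, your construction does yield the desired coupling; as it stands, though, the argument defers exactly the nontrivial content.
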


Assume now that $X_1$ and $X_2$ are Markov processes with values in an ordered space $S$, and
assume $R$ is a subrelation of the order that is stochastically preserved by $X_1$ and $X_2$. Then
$X_1(x_1,t) \lest X_2(x_2,t)$ for all $x_1 \sim x_2$. Especially, if $X_1$ and $X_2$ have unique
stationary distributions, then a sufficient condition for the stochastic ordering of the
stationary distributions is that $X_1$ and $X_2$ stochastically preserve some nontrivial
subrelation of the order relation.

\subsection{Subrelation algorithm}

Recall that a matrix $Q$ with entries $Q(x,y)$, $x,y \in S$ is called a \emph{rate matrix}, if
$Q(x,y) \ge 0$ for all $x\neq y$ and $Q(x,x) = -\sum_{y\neq x} Q(x,y)$ for all $x$. If $X$ is a
Markov process with rate matrix $Q$, then $Q(x,y)$ is the transition rate of $X$ from state $x$
into $y$, and we denote by $q(x) = -Q(x,x)$ the total transition rate of $X$ out of state $x$.

Given Markov processes $X_1$ and $X_2$ with rate matrices $Q_1$ and $Q_2$, define the
relation-to-relation mapping $M_{Q_1,Q_2}$ by
\begin{equation}
 \label{eq:M}
 M_{Q_1,Q_2}(R) = \{(x,y) \in R: (\mu_{x,y}, \nu_{x,y}) \in \Rst \},
\end{equation}
where the probability measures $\mu_{x,y}$ and $\nu_{x,y}$ are defined by
\begin{align}
 \label{eq:mu}
 \mu_{x,y}(u) &= q_{x,y}^{-1} Q_1(x,u) + \delta_{x}(u), \\
 \label{eq:nu}
 \nu_{x,y}(v) &= q_{x,y}^{-1} Q_2(y,v) + \delta_{y}(v),
\end{align}
and where $q_{x,y} = 1+q_1(x) + q_2(y)$. When there is no risk of confusion, we denote
$M_{Q_1,Q_2} = M$. Moreover, define recursively the sequence $M^k(R)$ by setting $M^0(R) = R$,
$M^{k}(R) = M(M^{k-1}(R))$ for $k \ge 1$, and denote the limit of the sequence by
\[
 M^*(R) = \cap_{k=0}^\infty M^k (R).
\]

\begin{theorem}\cite{leskelaNow}
\label{the:maximal}
The relation $M^*(R)$ is the maximal subrelation of $R$ that is stochastically preserved by $X_1$
and $X_2$. Especially:
\begin{enumerate}[(i)]
\item $X_1$ and $X_2$ stochastically preserve $R$ if and only if
    $M(R) = R$.
\item $X_1$ and $X_2$ stochastically preserve a nontrivial
    subrelation of $R$ if and only if $M^*(R) \neq \emptyset$.
\end{enumerate}
\end{theorem}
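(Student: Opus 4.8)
The plan is to prove the ``one-step'' characterization~(i) first, and then to obtain the statements about $M^*(R)$ from monotonicity of $M$ together with a compactness argument for couplings. Two observations will be used repeatedly. First, the probability vectors $\mu_{x,y}$ and $\nu_{x,y}$ of~\reff{eq:mu}--\reff{eq:nu} depend only on $(x,y)$ and on $Q_1,Q_2$, \emph{not} on the relation being iterated; as $R$ varies, only the requirement ``$(\mu_{x,y},\nu_{x,y})$ admits a coupling concentrated on $R$'' changes. Second, since $q_{x,y}=1+q_1(x)+q_2(y)$, the vectors $\mu_{x,y}$ and $\nu_{x,y}$ are genuine probability vectors, carrying mass at least $q_{x,y}^{-1}$ on the diagonal terms $\delta_x$ and $\delta_y$.

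\emph{Step 1: $M(R)=R$ iff $X_1$ and $X_2$ stochastically preserve $R$.} Here I would invoke the equivalence recalled above between stochastic preservation of $R$ and the existence of a Markovian coupling for which $R$ is invariant, and translate invariance into an infinitesimal condition on the coupling's rate matrix. Given a rate matrix $\hat Q$ on $\Sprod$ generating such a coupling with $R$ invariant, fix $(x,y)\in R$ and set $\lambda_{x,y}(u,v)=q_{x,y}^{-1}\hat Q((x,y),(u,v))$ for $(u,v)\ne(x,y)$, completing with the remaining mass on $(x,y)$. Using the marginal-rate conditions defining a coupling generator, together with the elementary bound $\hat q(x,y)\le q_1(x)+q_2(y)\le q_{x,y}$ on the total outgoing rate, one checks that $\lambda_{x,y}$ is a probability vector with marginals $\mu_{x,y}$ and $\nu_{x,y}$, concentrated on $R$ because $\hat Q$ has no transitions from $(x,y)$ out of $R$. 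Hence $(x,y)\in M(R)$, and since always $M(R)\subseteq R$, this gives $M(R)=R$. Conversely, if $M(R)=R$, then for every $(x,y)$ choose a coupling $\lambda_{x,y}$ of $\mu_{x,y}$ and $\nu_{x,y}$, concentrated on $R$ whenever $(x,y)\in R$, and put $\hat Q((x,y),(u,v))=q_{x,y}\lambda_{x,y}(u,v)$ off the diagonal; the same bookkeeping shows $\hat Q$ is a coupling rate matrix keeping $R$ invariant, and the coupled process is nonexplosive because each of its jumps moves at least one coordinate, so its explosion would force an explosion of $X_1$ or of $X_2$. This proves~(i). The generator/marginal computation is routine; the one point to watch is that the diagonal masses stay nonnegative, which is exactly what the ``$+1$'' in $q_{x,y}$ buys.

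\emph{Step 2: $M^*(R)$ is a fixed point of $M$.} The map $M$ is monotone: if $R\subseteq R'$, a coupling concentrated on $R$ is a fortiori concentrated on $R'$, so the stochastic relation generated by $R$ is contained in the one generated by $R'$, and with $R\subseteq R'$ this yields $M(R)\subseteq M(R')$. Since also $M(R)\subseteq R$, the relations $M^k(R)$ form a decreasing sequence, $M^*(R)=\cap_k M^k(R)$ is well defined, and $M(M^*(R))\subseteq M(M^k(R))=M^{k+1}(R)$ for all $k$ gives $M(M^*(R))\subseteq M^*(R)$. For the reverse inclusion, fix $(x,y)\in M^*(R)$. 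For each $k\ge1$ we have $(x,y)\in M^k(R)=M(M^{k-1}(R))$, so there is a coupling $\lambda_k$ of the \emph{fixed} pair $(\mu_{x,y},\nu_{x,y})$ concentrated on $M^{k-1}(R)$. Since couplings of two fixed probability vectors on a countable space form a uniformly tight family, a diagonal argument extracts a subsequence $\lambda_{k_j}$ converging pointwise to some $\lambda$; tightness ensures $\lambda$ has total mass one, and then Fatou's lemma applied to each marginal and summed forces the marginals of $\lambda$ to equal $\mu_{x,y}$ and $\nu_{x,y}$. Moreover $\lambda$ is concentrated on $M^*(R)$: any $(u,v)$ outside $M^*(R)$ is already outside some $M^{k_0-1}(R)$, hence $\lambda_{k_j}(u,v)=0$ for all large $j$ and so $\lambda(u,v)=0$. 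Thus $(\mu_{x,y},\nu_{x,y})$ admits a coupling concentrated on $M^*(R)$, i.e.\ $(x,y)\in M(M^*(R))$. Therefore $M(M^*(R))=M^*(R)$, and by~(i), $X_1$ and $X_2$ stochastically preserve $M^*(R)$.

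\emph{Step 3: maximality and~(ii).} Let $R'$ be any subrelation of $R$ stochastically preserved by $X_1$ and $X_2$. By~(i), $M(R')=R'$; combining this with monotonicity and induction on $k$ gives $R'=M^k(R')\subseteq M^k(R)$ for every $k$, hence $R'\subseteq M^*(R)$. So $M^*(R)$ is a stochastically preserved subrelation of $R$ containing every stochastically preserved subrelation, i.e.\ it is the maximal one. Statement~(ii) is then immediate: if some nontrivial subrelation of $R$ is stochastically preserved, it lies in $M^*(R)$, forcing $M^*(R)\ne\emptyset$; conversely, if $M^*(R)\ne\emptyset$, then $M^*(R)$ itself is such a subrelation. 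The main obstacle is the compactness step in Step~2: in the finite-state case the decreasing sequence $M^k(R)$ simply stabilizes and there is nothing to prove, but for infinite state spaces one genuinely needs tightness of the coupling family, and it is essential that $M$ always tests the same pair $(\mu_{x,y},\nu_{x,y})$, so that a single extraction handles all $k$ at once.
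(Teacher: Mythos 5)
The paper itself gives no proof of Theorem~\ref{the:maximal} --- it is quoted from~\cite{leskelaNow} --- so there is no in-paper argument to compare against; judged on its own, your proposal is essentially correct and follows the natural route: reduce to the one-step statement (i) via the recalled equivalence between stochastic preservation and existence of a Markovian coupling for which $R$ is invariant (legitimate to invoke, since the paper states it separately), then obtain $M(M^*(R))=M^*(R)$ from monotonicity plus a tightness/diagonal extraction for couplings of the fixed pair $(\mu_{x,y},\nu_{x,y})$, and get maximality and (ii) by induction. Steps 2 and 3 are complete as written; you rightly stress that $M$ always tests the same pair of measures, so one subsequential limit works for all $k$, and the Fatou argument for the marginals is sound because uniform tightness gives the limit full mass. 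The only place where you lean on more than routine bookkeeping is Step 1. In the necessity direction you use that a Markovian coupling (defined distributionally, at every time $t$) is a jump process whose rate matrix satisfies the lumped marginal identities $\sum_v \hat Q\bigl((x,y),(u,v)\bigr)=Q_1(x,u)$ for $u\neq x$ (and symmetrically); this is true but is itself a small $t\downarrow 0$ limit argument on the transition probabilities, not pure algebra. In the sufficiency direction you need that a rate matrix with these lumped marginals generates a genuine Markovian coupling; your nonexplosiveness argument is phrased slightly circularly, since the coordinate processes cannot be called copies of $X_1$ and $X_2$ before regularity of the coupled process is established --- the correct statement (a coupling generator of two regular generators is regular and has the right marginal laws) is standard in the coupling literature but deserves a citation or a short comparison with the minimal construction. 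With those two points cited or patched, the argument is complete and is, as far as the present paper indicates, in the same spirit as the original proof in~\cite{leskelaNow}.
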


\begin{algorithm}
\caption{Computation of $R'=M(R)$.}
\begin{algorithmic}
  \STATE $R' \gets $ $n_1$-by-$n_2$ zero matrix
  \FOR {$(x,y) \in R$}
     \STATE $q \gets 1 + q^{(1)}(x) + q^{(2)}(y)$
     \STATE $\mu \gets q^{-1} Q^{(1)}(x, \cdot) + \delta_x(\cdot)$
     \STATE $\nu \gets q^{-1} Q^{(2)}(y, \cdot) + \delta_y(\cdot)$
     \STATE Check whether $\mu \simst \nu$ (use Algorithm~\ref{alg:StochasticRelation})
     \IF {$\mu \simst \nu$}
         \STATE $R'(x,y) \gets 1$
     \ENDIF
 \ENDFOR
\end{algorithmic}
\label{alg:SubrelationNew}
\end{algorithm}

When the state spaces $S_1$ and $S_2$ are finite, Algorithm~\ref{alg:SubrelationNew} describes how
to numerically compute $M(R)$, and Algorithm~\ref{alg:SubrelationLimit} describes the computation
of $M^*(R)$. Observe that for finite state spaces, Algorithm~\ref{alg:SubrelationLimit} computes
the apparently infinite intersection $\cap_{k=0}^\infty M^k(R)$ in finite time, because as long as
$M^k(R)$ and $M^{k-1}(R)$ are not equal, they differ by at least one element, and the sequence
$M^k(R)$ is decreasing.

\begin{algorithm}
\caption{Computation of $R^*= \cap_{n=0}^\infty M^k(R)$.}
\begin{algorithmic}
  \STATE $R' \gets M(R)$ (use Algorithm~\ref{alg:SubrelationNew})
  \WHILE {$R' \neq R$}
     \STATE $R \gets R'$
     \STATE $R' \gets M(R)$ (use Algorithm~\ref{alg:SubrelationNew})
  \ENDWHILE
  \STATE $R^* \gets R'$
\end{algorithmic}
\label{alg:SubrelationLimit}
\end{algorithm}

\section{Truncation approach}
\label{sec:Truncation}

\subsection{Truncation of Markov processes}

If $Q$ is a rate matrix of a Markov process on a countably infinite space $S$,
and $S_n$ is a finite subset of $S$, we define the \emph{truncation} of $Q$
into $S_n$ by
\begin{equation}
 \label{eq:RateMatrixTruncation}
 Q_n(x,y) = \left\{
 \begin{aligned}
 Q(x,y),                              &\quad x \neq y, \quad x, y \in S_n, \\
 - \sum_{y \in S_n, y \neq x} Q(x,y), &\quad x=y, \quad x \in S_n.
 \end{aligned}
 \right.
\end{equation}
We shall later approximate $Q$ by $Q_n$ and use the finite subrelation algorithm applied to $Q_n$.
To understand the approximation error, we need to study how the untruncated process may escape the
set $S_n$. Given a rate matrix $Q$ on a countable space $S$, we say that an increasing sequence of
finite sets $S_n \subset S$ is a \emph{truncation sequence} for $Q$, if $\cup_{n=0}^\infty S_n =
S$, and
\begin{equation}
 \label{eq:truncation}
 \{y: Q(x,y) > 0\} \subset S_{n+1}
\end{equation}
for all $x \in S_n$.

\begin{example}
Let $Q$ be the rate matrix of a Markov process on $\Z_+$ that is skip-free to
the right, so that $Q(i,j) = 0$ for all $j > i+1$. Then $S_n = \Z_+ \cap [0,n]$
is truncation sequence for $Q$.
\end{example}

The next result shows that truncation sequences can be constructed for most Markov processes
encountered in applications. We say that a Markov process $X$ with rate matrix $Q$ has
\emph{locally bounded jumps}, if the set $\{y: Q(x,y) > 0\}$ is finite for all $x$.

\begin{lemma}
Any rate matrix $Q$ of a Markov process with locally bounded jumps possesses a truncating
sequence.
\end{lemma}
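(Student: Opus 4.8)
The plan is to construct the truncating sequence greedily, starting from a single point and repeatedly adjoining the (finitely many) states reachable in one jump. Fix an enumeration $S = \{s_0, s_1, s_2, \dots\}$ of the countable state space. Define the one-step reachability set of a finite set $A \subset S$ by $J(A) = A \cup \bigcup_{x \in A} \{y : Q(x,y) > 0\}$. By the locally bounded jumps hypothesis, each set $\{y : Q(x,y) > 0\}$ is finite, and a finite union of finite sets is finite, so $J(A)$ is finite whenever $A$ is finite. Moreover $J$ is monotone ($A \subset B$ implies $J(A) \subset J(B)$) and extensive ($A \subset J(A)$).

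Next I would define the sequence recursively by $S_0 = \{s_0\}$ and $S_{n+1} = J(S_n) \cup \{s_{n+1}\}$. Adjoining $s_{n+1}$ at each stage guarantees $s_n \in S_n$ for every $n$, hence $\bigcup_{n=0}^\infty S_n = S$. Each $S_n$ is finite by induction, using that $J$ preserves finiteness. The sequence is increasing because $S_n \subset J(S_n) \subset S_{n+1}$, using that $J$ is extensive. Finally, the defining property $\reff{eq:truncation}$ holds: if $x \in S_n$, then $\{y : Q(x,y) > 0\} \subset J(S_n) \subset S_{n+1}$.

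There is no serious obstacle here; the only point requiring the hypothesis is that $J$ maps finite sets to finite sets, which is precisely the locally bounded jumps assumption, and the only mild subtlety is remembering to inject the enumeration points $s_{n+1}$ so that the union exhausts $S$ (without this, the sequence could stabilize on a proper invariant subset). I would state the verification of the three bullet conditions in the definition of a truncation sequence as a short inductive checklist and conclude.
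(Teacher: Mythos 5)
Your proof is correct and follows essentially the same construction as the paper: exhaust the countable space (the paper uses an increasing sequence of finite sets $K_n$ where you use singletons $\{s_{n+1}\}$ from an enumeration) and recursively adjoin the one-jump reachability set $J(S_n)$, with locally bounded jumps guaranteeing finiteness by induction. The differences are purely cosmetic.
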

\begin{proof}
Because $S$ is countable, we may choose an increasing sequence of finite sets $K_n$ such that
$\cup_{n=0}^\infty K_n = S$. Using this sequence we may recursively define the sets $S_n$ by
setting $S_0 = K_0$, and
\[
 S_{n+1} = S_n \cup K_n \cup J(S_n), \quad n \ge 0,
\]
where
\[
 J(S_n) = \{y: Q(x,y) > 0 \ \text{for some} \ x \in S_n\}
\]
denotes the set of states that are reachable from $S_n$ by one jump. Then $\cup_{n=0}^\infty S_n =
S$, because $K_n \subset S_{n+1}$ for all $n \ge 0$. Moreover, $J(S_n) \subset S_{n+1}$ implies
that~\reff{eq:truncation} holds for all $n$, and induction shows that the sets $S_n$ are finite,
because $Q$ has locally bounded jumps.
\end{proof}

\subsection{Truncation of stochastic relations}

Let $R$ be a relation between countably infinite state spaces $S_1$ and $S_2$.
If $S_i'$ is a finite subset of $S_i$, $i=1,2$, we define the truncation of $R$
by
\begin{equation}
 \label{eq:RelationTruncation}
 R' = R \cap (S_1' \times S_2')
\end{equation}
The corresponding stochastic relation $\Rst'$, a relation between probability
measures on the finite spaces $S_1'$ and $S_2'$, is called the \emph{truncation
of $\Rst$} into $S_1' \times S_2'$.

If $\mu_i$ is probability measure on $S_i$ having all its mass on $S_i'$, we may regard $\mu_i$ as
a probability measure $\mu_i'$ on $S_i'$ by identifying subsets of $S_i'$ as subsets of $S_i$.

\begin{lemma}
\label{the:StochasticRelationTruncated}
Let $\mu_i$ be a probability measure on $S_i$ such that $\mu_i(S_i') =
1$, $i=1,2$. Then $(\mu_1,\mu_2) \in \Rst$ if and only if $(\mu'_1,\mu'_2) \in
\Rst'$.
\end{lemma}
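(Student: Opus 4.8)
The plan is to argue directly from the definition of a coupling, using the elementary fact that any coupling of $\mu_1$ and $\mu_2$ is forced to concentrate on $S_1' \times S_2'$ once $\mu_i(S_i') = 1$ for $i=1,2$.

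First I would treat the ``only if'' direction. Suppose $(\mu_1,\mu_2) \in \Rst$, so there is a coupling $\lambda$ of $\mu_1$ and $\mu_2$ with $\sum_{(x,y) \in R} \lambda(x,y) = 1$. Since the first marginal gives no mass to $S_1 \setminus S_1'$, summing the nonnegative quantities $\lambda(x,y)$ over $y$ shows $\lambda(x,y) = 0$ whenever $x \notin S_1'$; symmetrically $\lambda(x,y) = 0$ whenever $y \notin S_2'$. Hence $\lambda$ is concentrated on $S_1' \times S_2'$, and its restriction $\lambda'$ to $S_1' \times S_2'$, viewed through the stated identification of subsets of $S_i'$ with subsets of $S_i$, is a coupling of $\mu'_1$ and $\mu'_2$. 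Moreover $\sum_{(x,y) \in R'} \lambda'(x,y) = \sum_{(x,y) \in R \cap (S_1' \times S_2')} \lambda(x,y) = \sum_{(x,y) \in R} \lambda(x,y) = 1$, where the last equality uses that $\lambda$ vanishes off $S_1' \times S_2'$. Thus $(\mu'_1,\mu'_2) \in \Rst'$.

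For the converse I would simply reverse this construction. Given a coupling $\lambda'$ of $\mu'_1$ and $\mu'_2$ on $S_1' \times S_2'$ with $\sum_{(x,y) \in R'} \lambda'(x,y) = 1$, extend it to a probability vector $\lambda$ on $\Sprod$ by setting $\lambda(x,y) = 0$ for $(x,y) \notin S_1' \times S_2'$. The marginal conditions are immediate: for $x \in S_1'$ one has $\sum_{y} \lambda(x,y) = \sum_{y \in S_2'} \lambda'(x,y) = \mu'_1(x) = \mu_1(x)$, while for $x \notin S_1'$ both sides are $0 = \mu_1(x)$; the second marginal is symmetric. Finally $\sum_{(x,y) \in R} \lambda(x,y) = \sum_{(x,y) \in R \cap (S_1' \times S_2')} \lambda'(x,y) = \sum_{(x,y) \in R'} \lambda'(x,y) = 1$, so $(\mu_1,\mu_2) \in \Rst$.

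I do not anticipate a real obstacle; the only care needed is the bookkeeping around the identification between measures on $S_i'$ and measures on $S_i$ concentrated on $S_i'$, together with the trivial observation that $R' = R \cap (S_1' \times S_2')$ coincides with $R$ precisely on the region where all the relevant couplings are supported. An alternative would be to invoke Theorem~\ref{the:StochasticRelation}: because $\mu_1(S_1')=1$, the left side of \reff{eq:StochasticRelation1} is unchanged when $U$ is replaced by $U \cap S_1'$, and enlarging $U$ beyond $S_1'$ only increases the right side, so it suffices to test $U \subset S_1'$; for such $U$ only $y \in S_2'$ contribute to the right side, and there $R$ agrees with $R'$, giving exactly the criterion for $\Rst'$. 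I would nonetheless write up the coupling argument, since it avoids this monotonicity case analysis.
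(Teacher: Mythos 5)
Your argument is correct, but it takes a genuinely different route from the paper. The paper's proof is a one-line reduction: it observes that $R'$ is the relation induced from $R$ by the natural embeddings $\phi_i: S_i' \to S_i$ (with $\mu_i' = \mu_i \circ \phi_i$) and then invokes a general result on induced relations, namely Lemma 5.2 of the cited reference \cite{leskelaNow}. You instead argue directly from the definition of a coupling: since each marginal puts full mass on $S_i'$, any coupling witnessing $(\mu_1,\mu_2) \in \Rst$ is automatically concentrated on $S_1' \times S_2'$, so restriction gives a witness for $\Rst'$, and conversely extension by zero turns a witness for $\Rst'$ into one for $\Rst$; on the support region $R$ and $R' = R \cap (S_1' \times S_2')$ coincide, so the relevant mass-one conditions transfer both ways. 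Your marginal computations and the vanishing of $\lambda$ off $S_1' \times S_2'$ are all verified correctly, so there is no gap. The trade-off: the paper's route is shorter and leans on machinery already developed for arbitrary (not necessarily countable) spaces and general measurable maps, which keeps this lemma consistent with the framework of \cite{leskelaNow}; your route is self-contained and elementary, requiring nothing beyond the discrete definition of $\simst$ given in Section~\ref{sec:StochasticRelations}, at the cost of redoing bookkeeping that the cited lemma already encapsulates. Your sketched alternative via Theorem~\ref{the:StochasticRelation} (restricting test sets $U$ to $S_1'$ by monotonicity) would also work, but the coupling argument you chose is the cleaner of the two.
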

\begin{proof}
The claim follows using~\cite[Lemma 5.2]{leskelaNow}, after observing that $R'$
is the relation induced from $R$ by the pair $(\phi_1,\phi_2)$, where $\phi_i$
is the natural embedding of $S'_i$ into $S_i$, and $\mu_i' = \mu_i \circ
\phi_i$.
\end{proof}

\subsection{Truncated subrelation algorithm}

Given a pair of Markov processes $X_1$ and $X_2$ with rate matrices $Q_1$ and $Q_2$, and a
relation $R$ between $S_1$ and $S_2$, Algorithm~\ref{alg:SubrelationLimit} describes how to
recursively calculate $R^* = M_{Q_1,Q_2}^*(R)$ as the limit of the sequence $R^k =
M_{Q_1,Q_2}^k(R)$. If the state spaces are infinite, $R^k$ cannot be computed using finite time
and memory. Nevertheless, when the processes have locally bounded jumps, the truncations
$T_N(R^k)$ of $R^k$ into suitable truncation sets $S_{1,N} \times S_{2,N}$ can be computed
precisely, as shall be shown next.

For any relation $R$ between $S_{1,n}$ and $S_{2,n}$, let $M_n(R) =
M_{Q_{1,n},Q_{2,n}}(R)$ be the relation given by
Algorithm~\ref{alg:SubrelationNew} applied to $R$ using the truncations
$Q_{1,n}$ and $Q_{2,n}$ of $Q_1$ and $Q_2$ as defined in~\reff{eq:truncation}.
Moreover, denote by $M_\infty = M_{Q_1,Q_2}$ the corresponding untruncated
mapping.

\begin{lemma}
\label{the:OneStepTruncated}
For any pair of Markov processes with locally bounded jumps, the truncation of $M_\infty(R)$ to
$S_{1,n} \times S_{2,n}$ satisfies
\[
 T_n(M_\infty(R)) = T_n(M_{n+1}(T_{n+1}(R))) \quad \text{for all $n$}.
\]
\end{lemma}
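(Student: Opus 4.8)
The plan is to unwind both sides of the claimed identity through the definitions and reduce everything to the fact that the relevant probability measures $\mu_{x,y}$ and $\nu_{x,y}$ have support inside the truncation sets, so that Lemma~\ref{the:StochasticRelationTruncated} applies. Fix $n$ and a pair $(x,y) \in S_{1,n} \times S_{2,n}$. The essential observation is this: for $x \in S_{1,n}$, the defining property~\reff{eq:truncation} of the truncation sequence gives $\{u : Q_1(x,u) > 0\} \subset S_{1,n+1}$, and since $\mu_{x,y}(u) = q_{x,y}^{-1} Q_1(x,u) + \delta_x(u)$ with $x \in S_{1,n} \subset S_{1,n+1}$, the measure $\mu_{x,y}$ has all its mass on $S_{1,n+1}$. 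Moreover the truncated rate matrix $Q_{1,n+1}$ agrees with $Q_1$ on off-diagonal entries with both coordinates in $S_{1,n+1}$, and the diagonal correction does not affect $\mu_{x,y}$ away from $x$; a short check shows $q_{1,n+1}(x) = q_1(x)$ for $x \in S_{1,n}$ because every target state $u$ with $Q_1(x,u)>0$ already lies in $S_{1,n+1}$. Hence the measure $\mu^{(n+1)}_{x,y}$ built from $Q_{1,n+1}$ coincides with $\mu_{x,y}$ (viewed as a measure on $S_{1,n+1}$), and symmetrically for $\nu_{x,y}$.

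First I would make the three reductions explicit. For $(x,y) \in S_{1,n}\times S_{2,n}$: membership of $(x,y)$ in $T_n(M_\infty(R))$ means $(x,y)\in R$ and $(\mu_{x,y},\nu_{x,y}) \in \Rst$; membership in $T_n(M_{n+1}(T_{n+1}(R)))$ means $(x,y) \in T_{n+1}(R)$ — equivalently $(x,y)\in R$, since $(x,y) \in S_{1,n}\times S_{2,n} \subset S_{1,n+1}\times S_{2,n+1}$ — and $(\mu^{(n+1)}_{x,y}, \nu^{(n+1)}_{x,y})$ lies in the stochastic relation generated by $T_{n+1}(R) = R \cap (S_{1,n+1}\times S_{2,n+1})$. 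So the two sets have the same first condition, and the proof reduces to showing the two stochastic-relation conditions are equivalent. By the support observation above, $\mu^{(n+1)}_{x,y} = \mu'_{x,y}$ and $\nu^{(n+1)}_{x,y} = \nu'_{x,y}$ in the notation of Lemma~\ref{the:StochasticRelationTruncated}, applied with $S_i' = S_{i,n+1}$ and $R' = T_{n+1}(R)$. That lemma then states precisely that $(\mu_{x,y},\nu_{x,y}) \in \Rst$ iff $(\mu'_{x,y},\nu'_{x,y}) \in \Rst'$, which closes the argument.

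I expect the main obstacle to be the bookkeeping around the diagonal entries of the truncated rate matrices: one must verify carefully that truncating $Q_i$ to $S_{i,n+1}$ changes neither $q_i(x)$ for $x\in S_{i,n}$ nor the off-diagonal mass seen by $\mu_{x,y}$, so that the truncated and untruncated normalizing constants $q_{x,y}$ agree and the measures are genuinely identical (not merely related). This is where~\reff{eq:truncation} is used in full strength — it guarantees no jump mass out of $S_{i,n}$ is lost when we restrict to $S_{i,n+1}$ — and it is the one place where the index shift from $n$ to $n+1$ is forced rather than cosmetic. Once that identification is in hand, everything else is a direct appeal to Lemma~\ref{the:StochasticRelationTruncated} and the definition~\reff{eq:M} of $M$, carried out pointwise over pairs $(x,y)$ in the truncation box.
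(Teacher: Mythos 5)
Your proposal is correct and follows essentially the same route as the paper's proof: identify both sides pointwise over $(x,y)\in S_{1,n}\times S_{2,n}$, use the truncation-sequence property~\reff{eq:truncation} to see that $\mu_{x,y}$ and $\nu_{x,y}$ are supported in $S_{1,n+1}\times S_{2,n+1}$ and unchanged when $Q_i$ is replaced by $Q_{i,n+1}$, and then invoke Lemma~\ref{the:StochasticRelationTruncated} with $R'=T_{n+1}(R)$. Your explicit check that the diagonal entries satisfy $q_{i,n+1}(x)=q_i(x)$ for $x\in S_{i,n}$ is a welcome elaboration of a step the paper states without detail.
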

\begin{proof}
Observe that $T_n(M_\infty(R))$ equals the set of points $(x,y) \in T_n(R)$ such that the measures
$\mu_{x,y}$ and $\nu_{x,y}$ defined in~\reff{eq:mu} and~\reff{eq:nu} are stochastically related
with respect to $R$. Because $S_{1,n}$ and $S_{2,n}$ are truncating sequences for $Q_1$ and $Q_2$,
we know that $\mu_{x,y}$ and $\nu_{x,y}$ have supports in $S_{1,n+1}$ and $S_{2,n+1}$,
respectively. Hence by Lemma~\ref{the:StochasticRelationTruncated}, $T_n(M_\infty(R))$ equals the
set of points $(x,y) \in T_n(R)$ such that $\mu_{x,y}$ and $\nu_{x,y}$ are stochastically related
with respect to $T_{n+1}(R)$. Because for $(x,y) \in T_n(R)$, the measures $\mu_{x,y}$ and
$\nu_{x,y}$ remain the same, if we replace $Q_1$ and $Q_2$ by $Q_{1,n+1}$ and $Q_{2,n+1}$
in~\reff{eq:mu} and~\reff{eq:nu}, the claim follows.
\end{proof}

\begin{theorem}
\label{the:SubrelationTruncated}
For any pair of Markov processes with locally bounded jumps, the truncation of $R^k =
M^k_\infty(R)$ to $S_{1,n} \times S_{2,n}$ satisfies
\[
 T_n(R^k) = T_n(M_{n+1}T_{n+1})\cdots (M_{n+k}T_{n+k})(R).
\]
\end{theorem}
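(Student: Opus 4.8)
The plan is to prove the identity by induction on $k$, treating the statement ``for all $n$, $T_n(R^k) = T_n(M_{n+1}T_{n+1})\cdots(M_{n+k}T_{n+k})(R)$'' as a single proposition with $n$ left free; this is essential, since the inductive step will invoke the hypothesis at level $n+1$ rather than $n$. The base case $k=0$ is trivial (both sides equal $T_n(R)$, the right-hand side being an empty composition), and $k=1$ is exactly Lemma~\ref{the:OneStepTruncated}. Before the induction I would record that the right-hand side is well typed: reading it from right to left, $T_{n+k}$ sends the relation $R$ on $S_1\times S_2$ to a relation on $S_{1,n+k}\times S_{2,n+k}$, on which $M_{n+k}$ is defined; the next truncation $T_{n+k-1}$ restricts to $S_{1,n+k-1}\times S_{2,n+k-1}$, on which $M_{n+k-1}$ acts; and so on, so that each $M_{n+j}$ is applied to a relation living in $S_{1,n+j}\times S_{2,n+j}$, exactly as required by its definition.

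For the inductive step, assume the identity holds for $k$ and every $n$. Since $R^{k+1}=M_\infty(R^k)$, Lemma~\ref{the:OneStepTruncated} applied to the relation $R^k$ (the lemma is stated for an arbitrary relation, so this is legitimate) gives $T_n(R^{k+1}) = T_n(M_{n+1}(T_{n+1}(R^k)))$. Now apply the induction hypothesis with $n$ replaced by $n+1$ to obtain $T_{n+1}(R^k) = T_{n+1}(M_{n+2}T_{n+2})\cdots(M_{n+1+k}T_{n+1+k})(R)$. Substituting this expression for $T_{n+1}(R^k)$ inside $M_{n+1}(\,\cdot\,)$ and applying $T_n$ yields
\[
 T_n(R^{k+1}) = T_n(M_{n+1}T_{n+1})(M_{n+2}T_{n+2})\cdots(M_{n+1+k}T_{n+1+k})(R),
\]
which is the claimed identity for $k+1$, the composition now running over the indices $n+1,\dots,n+1+k$. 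This closes the induction.

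The step that needs the most care is purely the index bookkeeping: recognizing that each application of Lemma~\ref{the:OneStepTruncated} peels off one factor $M_\infty$ at the cost of raising the truncation level by one, so that after $k$ applications one is working at level $n+k$, and that this is captured precisely by inducting on $k$ with $n$ kept general and shifting $n\mapsto n+1$ in the hypothesis. No new analytic input is required beyond Lemmas~\ref{the:StochasticRelationTruncated} and~\ref{the:OneStepTruncated}; the theorem is essentially an unrolling of the one-step lemma.
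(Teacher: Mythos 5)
Your proof is correct and follows exactly the paper's route: the paper's own proof is simply ``apply Lemma~\ref{the:OneStepTruncated} and induction,'' and your write-up is that same induction on $k$ (with $n$ kept general and shifted to $n+1$ in the inductive step), spelled out in full.
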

\begin{proof}
Apply Lemma~\ref{the:OneStepTruncated} and induction.
\end{proof}

Algorithm~\ref{alg:SubrelationTruncated} describes how Theorem~\ref{the:SubrelationTruncated} can
be used to compute $T_n(R^k)$ in finite time and memory.

\begin{algorithm}
\caption{Computation of $R' = T_N(M^K(R))$.}
\begin{algorithmic}
  \STATE $R' \gets T_{N+K}(R)$
  \FOR {$k=1,\dots,K$}
     \STATE $n \gets N+K+1-k$
     \STATE $Q_{1,n} \gets$ truncation of $Q_1$ into $S_{1,n}$
     \STATE $Q_{2,n} \gets$ truncation of $Q_2$ into $S_{2,n}$
     \STATE $R' \gets T_n(R')$
     \STATE $R' \gets $ Algorithm~\ref{alg:SubrelationNew} applied to
     $(Q_{1,n},Q_{2,n},R')$
  \ENDFOR
  \STATE $R' \gets T_N(R')$
\end{algorithmic}
\label{alg:SubrelationTruncated}
\end{algorithm}

The following result is a necessary condition for finding a subrelation of $R$ that is
stochastically preserved by a pair of Markov processes.

\begin{theorem}
\label{the:NecessaryCondition}
Let $X_1$ and $X_2$ be Markov processes with locally bounded jumps. If $T_n(M^k(R)) =
\emptyset$ for some $k$, then $T_n(M^*(R)) = \emptyset$. Especially, if for all $n$, $T_n(M^k(R))
= \emptyset$ for some $k$, then $X_1$ and $X_2$ do not stochastically preserve any nontrivial
subrelation of $R$.
\end{theorem}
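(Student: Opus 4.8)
The plan is to prove the contrapositive-style chain: from $T_n(M^k(R)) = \emptyset$ deduce $T_n(M^*(R)) = \emptyset$, and then assemble these pointwise statements into the global conclusion. The key observation is that the sequence $R^j = M^j(R)$ is decreasing in $j$, hence so is the truncated sequence $T_n(R^j)$ for each fixed $n$. First I would note that $T_n$ is monotone with respect to inclusion of relations, so $R^* \subset R^k$ immediately gives $T_n(M^*(R)) = T_n(R^*) \subset T_n(R^k) = \emptyset$, which forces $T_n(M^*(R)) = \emptyset$. This is essentially a one-line argument once one recalls $M^*(R) = \cap_{j=0}^\infty M^j(R) \subset M^k(R)$.

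For the second assertion, suppose that for every $n$ there exists $k = k(n)$ with $T_n(M^k(R)) = \emptyset$. By the first part, $T_n(M^*(R)) = \emptyset$ for every $n$. Since $S_{1,n} \times S_{2,n}$ exhausts $S_1 \times S_2$ as $n \to \infty$ (because $S_{1,n}$ and $S_{2,n}$ are truncation sequences, hence $\cup_n S_{i,n} = S_i$), any pair $(x,y) \in M^*(R)$ would lie in some $S_{1,n} \times S_{2,n}$, contradicting $T_n(M^*(R)) = \emptyset$. Therefore $M^*(R) = \emptyset$. By Theorem~\ref{the:maximal}(ii), $M^*(R) = \emptyset$ is equivalent to $X_1$ and $X_2$ not stochastically preserving any nontrivial subrelation of $R$, which is the claim.

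The only genuine subtlety — and the step I would be most careful about — is ensuring that $T_n(M^k(R))$ as defined via the untruncated mapping $M = M_\infty$ is actually the object computed by Algorithm~\ref{alg:SubrelationTruncated}; but this is exactly what Theorem~\ref{the:SubrelationTruncated} provides, so the emptiness hypothesis is effectively checkable. Beyond that, everything reduces to monotonicity of $M$ (each $M^j(R) \subset M^{j-1}(R)$, visible directly from the definition~\reff{eq:M}) together with the exhaustion property of the truncation sequences. I do not anticipate a real obstacle here; the result is a soft consequence of the nesting structure already established, and the proof is short enough that the author will likely just write "Apply Theorem~\ref{the:maximal} and the monotonicity of the sequence $M^k(R)$."
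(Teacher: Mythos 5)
Your proposal is correct and follows essentially the same route as the paper: the first claim from the decreasing sequence $M^k(R)$ together with monotonicity of $T_n$ under inclusion, and the second from the exhaustion $\cup_n (S_{1,n}\times S_{2,n}) = S_1\times S_2$ forcing $M^*(R)=\emptyset$, then invoking Theorem~\ref{the:maximal}(ii). Your explicit citation of Theorem~\ref{the:maximal}(ii) just spells out a step the paper leaves implicit.
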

\begin{proof}
Because the sequence $M^k(R)$ is decreasing, and because the truncation map is monotone with
respect to set inclusion, the first claim follows. For the second claim, observe that if $T_n(R^*)
= \emptyset$ for all $n$, then because $\cup_n (S_{1,n} \times S_{2,n}) = S$, $R^* = \emptyset$.
\end{proof}

\section{Applications}
\label{sec:Applications}

\subsection{Multilayer loss networks}

\subsubsection{Overflow routing}

Consider a loss system with $K$ customer classes and two layers of servers, where layer~1 contains
$m_k$ servers dedicated to class $k$, and layer~2 consists of $n$ servers capable of serving all
customer classes. Arriving class-$k$ customers are routed to vacant servers in one of the layers,
with preference given to layer~1; or rejected otherwise. For analytical tractability, we assume
that the interarrival times and the service requirements of class-$k$ customers are exponentially
distributed with parameters $\lambda_k$ and $\mu_k$, respectively, and that all these random
variables across all customer classes are independent. For brevity, we denote $m=(m_1,\dots,m_K)$,
$\lambda = (\lambda_1,\dots,\lambda_K)$, and $\mu = (\mu_1,\dots,\mu_K)$.

\begin{figure}[h]
 \psfrag{L1}{ $\lambda_1$}
 \psfrag{L2}{ $\lambda_2$}
 \psfrag{L3}{ $\lambda_3$}
 \psfrag{M1}{$m_1$}
 \psfrag{M2}{$m_2$}
 \psfrag{M3}{$m_3$}
 \psfrag{N}{$n$}
 \psfrag{La1}{ Layer $1$}
 \psfrag{La2}{ Layer $2$}
 \centering
 \includegraphics[width=.6\textwidth]{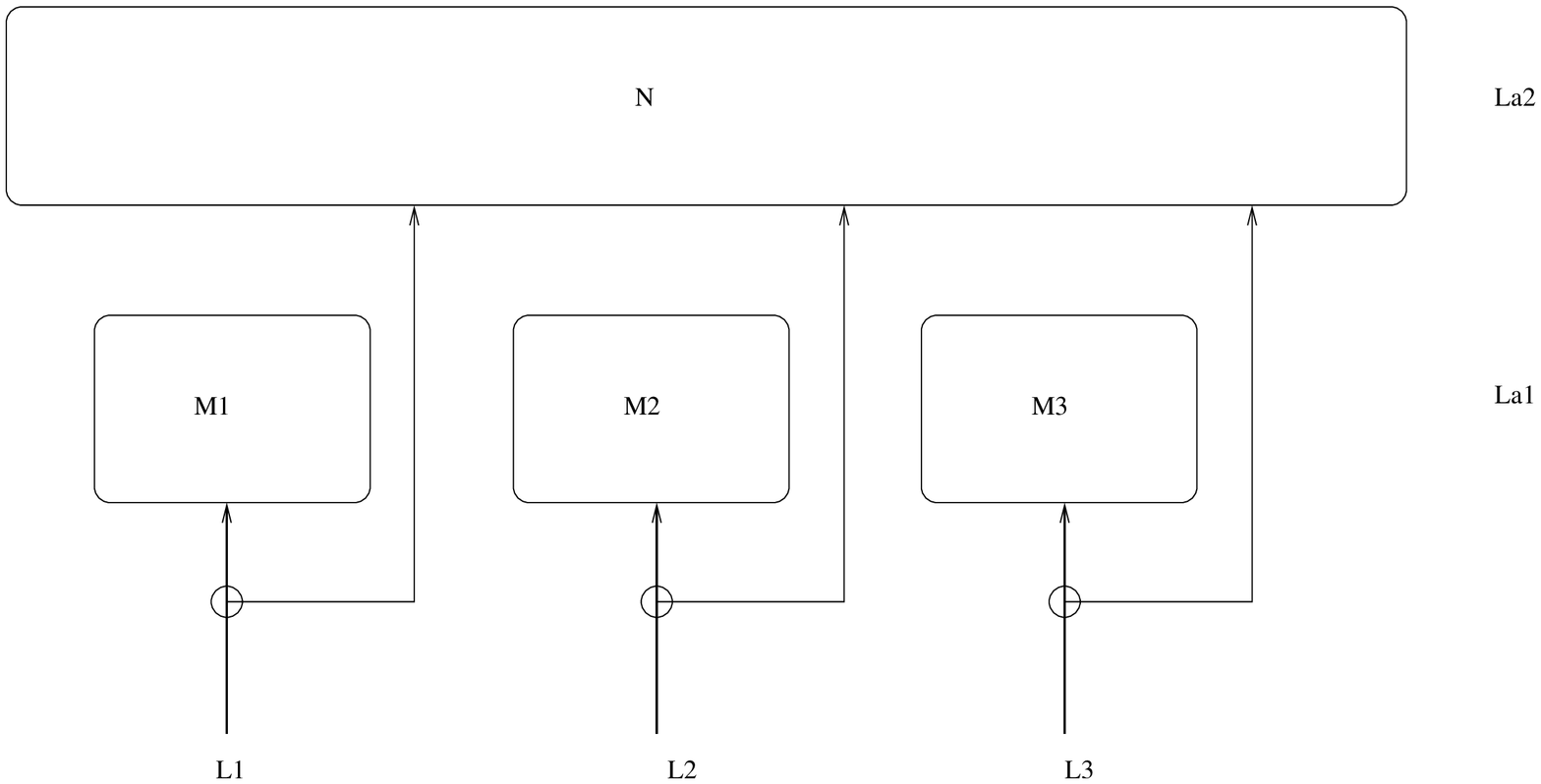}
 \caption{\label{fig:system} Two-layer loss network with three customer classes ($K=3$).}
\end{figure}

Denote by $X_{i,k}(t)$ the number of class-$k$ customers being served at layer $i$ at time $t$.
The system is described by the Markov process $X = (X_{i,k})$ taking values in
\begin{equation}
 \label{eq:stateSpace}
 S_1 = \left\{x \in \Z_+^K \times \Z_+^K: \ x_{1,k} \le m_k \ \forall k, \ \tsum_{k=1}^K x_{2,k} \le n \right\},
\end{equation}
and having the transitions
\[
 x \mapsto \left\{
 \begin{aligned}
  x + e_{1,k}, &\quad \text{at rate} \ \lambda_k 1(x_{1,k} < m_k), \\
  x + e_{2,k}, &\quad \text{at rate} \ \lambda_k 1(x_{1,k} = m_k, \tsum_{k=1}^K x_{2,k} < n), \\
  x - e_{1,k}, &\quad \text{at rate} \ \mu_k x_{1,k}, \\
  x - e_{2,k}, &\quad \text{at rate} \ \mu_k x_{2,k},
 \end{aligned}
 \right.
\]
where $e_{i,k}$ denotes the unit vector in $\Z_+^K \times \Z_+^K$ corresponding to the coordinate
direction $(i,k)$.

\subsubsection{Maximum packing}
\label{sec:modelMaximumPacking}

To approximate the original two-layer loss system, we consider a modification of the system, where
customers are redirected from layer~2 to layer~1 as soon as servers become vacant. This
corresponds to the so-called maximum packing policy introduced by Everitt and
Macfadyen~\cite{everitt1983}. Denote by $Y_k(t)$ the total number of customers in the system with
maximum packing. Then $t \mapsto Y(t)=(Y_1(t),\dots,Y_K(t))$ is a Markov process
(see~\cite{jonckheere2008}) with values in
\[
 S_2 = \left\{ y \in \Z_+^K: \tsum_{k=1}^K (y_k-m_k)_+ \le n \right\},
\]
and having the transitions
\[
 y \mapsto \left\{
 \begin{aligned}
  y + e_k, &\quad \text{at rate} \ \lambda_k 1(y + e_k \in S_2), \\
  y - e_k, &\quad \text{at rate} \ \mu_k y_k.
 \end{aligned}
 \right.
\]
The structure of the above transition rates implies that the stationary distribution $\pi_Y$ of
$Y$ is a product of Poisson distributions truncated to $S_2$~\cite{kelly1991}, so that
\[
 \pi_Y(y) = c \prod_{k=1}^K \frac{(\lambda_k/\mu_k)^{y_k}}{y_k!}, \quad y \in S_2,
\]
where the constant $c$ can be solved from $\sum_{y \in S_2} \pi_Y(y) = 1$. This product form
structure allows for fast computation of stationary performance characteristics of the maximum
packing system.

\subsubsection{Stochastic comparison}

Table~\ref{tab:Relations} illustrates the outcomes of the subrelation algorithm (computed
using~\cite{leskelaSW}) applied to various initial relations, where
\begin{align*}
 \RSum       &= \left\{ (x,y) \in S_1 \times S_2: \tsum_k (x_{1,k} + x_{2,k}) \le \tsum_k y_k \right\},\\
 \RMonoSum   &= \left\{ (x,y) \in S_1 \times S_2: \tsum_k x_{1,k} \le \tsum_k (y_k \wedge m_k) \right\}, \\
 \RMonoCoord &= \left\{ (x,y) \in S_1 \times S_2: x_{1,k} \le y_k \wedge m_k \ \text{for all} \ k
 \right\}.
\end{align*}
Note that $\RMonoSum$ relates the state pairs $(x,y)$ where the total number of layer-1 customers
corresponding to $x$ is less than that corresponding to $y$. Moreover, $\RMonoCoord$ relates the
state pairs where the populations in layer~1 are coordinatewise ordered. The entry "several" in
Table~\ref{tab:Relations} refers to running Algorithm~\ref{alg:SubrelationNew} separately for
several pseudorandom parameter combinations, and taking the intersection of the produced relations
as a final result.

\begin{table}[h]
 \centering
 \begin{tabular}[h]{c|c|c|c}
  $R$   & $(\lambda_1,\lambda_2)$ & $(\mu_1,\mu_2)$ & $M^*(R)$                                 \\
  \hline \hline
  $\RSum$ & $(1,1)$                 & $(1,1)$         & $\RSum \cap \RMonoSum \cap \RMonoMin$ \\
  $\RSum$ & several                 & $(1,1)$         & $\RSum \cap \RMonoCoord$ \\
  $\RSum$ & several                 & several         & $\emptyset$ \\ \hline
  $\RMonoSum$   & $(1,1)$           & $(1,1)$         & $\RMonoSum$ \\
  $\RMonoSum$   & several           & $(1,1)$         & $\RMonoCoord$ \\
  $\RMonoSum$   & several           & several         & $\RMonoCoord$ \\ \hline
  $\RMonoCoord$ & several           & several         & $\RMonoCoord$ \\
 \end{tabular}
 \caption{\label{tab:Relations} Outcomes of the subrelation algorithm for a network with $m_1=1, m_2=1,n=2$.}
\end{table}

From Table~\ref{tab:Relations}, we can make several conclusions on the behavior
of the subrelation algorithm:
\begin{itemize}
  \item The pair $(X_1,X_2)$ appears to stochastically preserve $\RMonoSum$ for
      all parameter combinations (same parameters in both systems). This
      fact is in fact not hard to verify analytically.
  \item When $\mu_1=\mu_2$, the pair $(X_1,X_2)$ stochastically preserves a nontrivial subrelation of
      $\RSum$. The maximal subrelation of $\RSum$ preserved may depend on
      the model parameters.
  \item $\RSum \cap \RMonoCoord$ appears to be a subrelation of
      $\RSum$ that is stochastically preserved for all parameter choices, as long as
      $\mu_1=\mu_2$ (Figure~\ref{fig:LossSystemTwoLambdas}). This fact is
      proved in~\cite{jonckheere2008}.
  \item  When the system is symmetric ($\lambda_1=\lambda_2$, $\mu_1=\mu_2$
      and $m_1=m_2$), a larger relation $\RSum \cap \RMonoSum \cap
      \RMonoMin \supset \RSum \cap \RMonoCoord$ is stochastically preserved
      (Figure~\ref{fig:LossSystemOneLambda}).
  \item When $\mu_1 \neq \mu_2$, $\RSum$ in general does not have a
      nontrivial subrelation preserved by the pair. This fact is also
      reflected in~\cite[Example 5.2.1]{jonckheere2008}, where it was found
      that the stationary distributions are not in general stochastically ordered
      with respect to $\RSum$.
\end{itemize}
As an illustration, the limiting relations $\RSum \cap \RMonoSum \cap \RMonoMin$ and $\RSum \cap
\RMonoCoord$ (filled circles), together with the initial relation $\RSum$ (filled + unfilled
circles) are plotter in Figures~\ref{fig:LossSystemOneLambda} and~\ref{fig:LossSystemTwoLambdas},
respectively.

\begin{figure}[h]
 \centering
 \includegraphics[width=.6\textwidth]{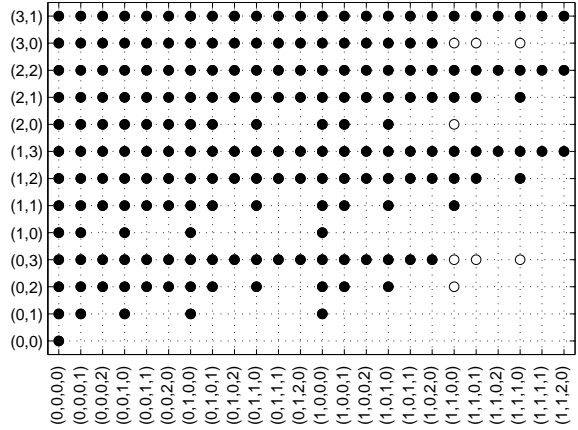}
 \caption{\label{fig:LossSystemOneLambda} Relations $\RSum$ and $M^*(\RSum)$ for a system with
 $m_1=1,m_2=1,n=2$, for $\lambda = (1,1)$ and $\mu = (1,1)$.}
\end{figure}

\begin{figure}[h]
 \centering
 \includegraphics[width=.6\textwidth]{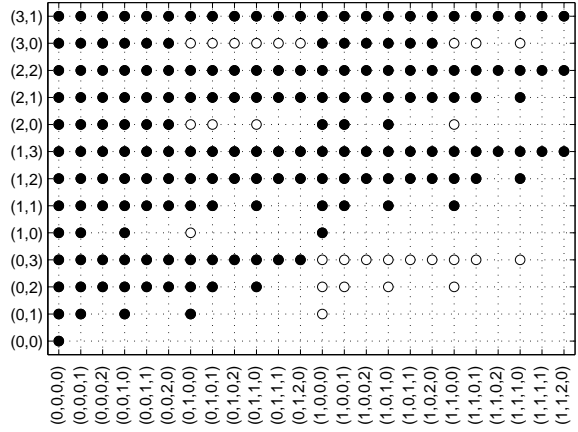}
 \caption{\label{fig:LossSystemTwoLambdas} Relations $\RSum$ and $M^*(\RSum)$ for a system with
 $m_1=1,m_2=1,n=2$, for various $\lambda$ and fixed $\mu=(1,1)$.}
\end{figure}

\subsection{Parallel queues}
\label{sec:ParallelQueues}

Consider a system of two queues in parallel, where customers of queue $k$ have arrival rate
$\lambda_k$ and service rate $\mu_k$. Assuming that all interarrival and service times are
independent and exponential, the queue length process $X=(X_1,X_2)$ is a Markov process in
$\Z_+^2$ with transitions
\[
 x \mapsto \left\{
 \begin{aligned}
  x + e_k, &\quad \text{at rate} \ \lambda_k, \\
  x - e_k, &\quad \text{at rate} \ \mu_k 1(x_k > 0).
 \end{aligned}
 \right.
\]
We shall also consider a modification of the system, where load is balanced by routing incoming
traffic to the shortest queue, modeled as a Markov process $\XLB=(\XLB_1,\XLB_2)$ in $\Z_+^2$ with
transitions
\[
 x \mapsto \left\{
 \begin{aligned}
  x + e_1, &\quad \text{at rate} \ (\lambda_1 + \lambda_2) 1(x_1 < x_2) + \lambda_1 1(x_1 = x_2), \\
  x + e_2, &\quad \text{at rate} \ (\lambda_1 + \lambda_2) 1(x_1 > x_2) + \lambda_2 1(x_1 = x_2), \\
  x - e_1, &\quad \text{at rate} \ \mu_1 1(x_1 > 0), \\
  x - e_2, &\quad \text{at rate} \ \mu_2 1(x_2 > 0).
 \end{aligned}
 \right.
\]

Common sense suggests that load balancing decreases the number of customers in the system in some
sense. The validity of this comparison property can be numerically studied using
Algorithm~\ref{alg:SubrelationTruncated}. Denote the rate matrix of $\XLB$ by $Q_1$ and the rate
matrix of $X$ by $Q_2$, and let $S_n = \Z_+^2 \cap [0,n-1]^2$, $n \ge 1$. Then $S_n$ is a
truncation sequence for both $Q_1$ and $Q_2$.

Figure~\ref{fig:LoadBalancingCoordLessThan} illustrates five iterations of the subrelation
algorithm (computed using~\cite{leskelaSW}) truncated to $S_3$ applied to the coordinatewise order
\[
 \RCoord = \left\{(x,y) \in \Z_+^2 \times \Z_+^2: x_1 \le y_1, \ x_2 \le y_2 \right\},
\]
with the parameters $\lambda_1,\lambda_2,\mu_1,\mu_2$ all equal to one. Because $T_3(R^4) =
T_3(R^5) = \emptyset$, we conjecture that there exists no nontrivial subrelation of $\RCoord$
stochastically preserved by $(\XLB,X)$ (see Theorem~\ref{the:NecessaryCondition}).
\begin{figure}[h]
 \centering
 \includegraphics[width=.8\textwidth]{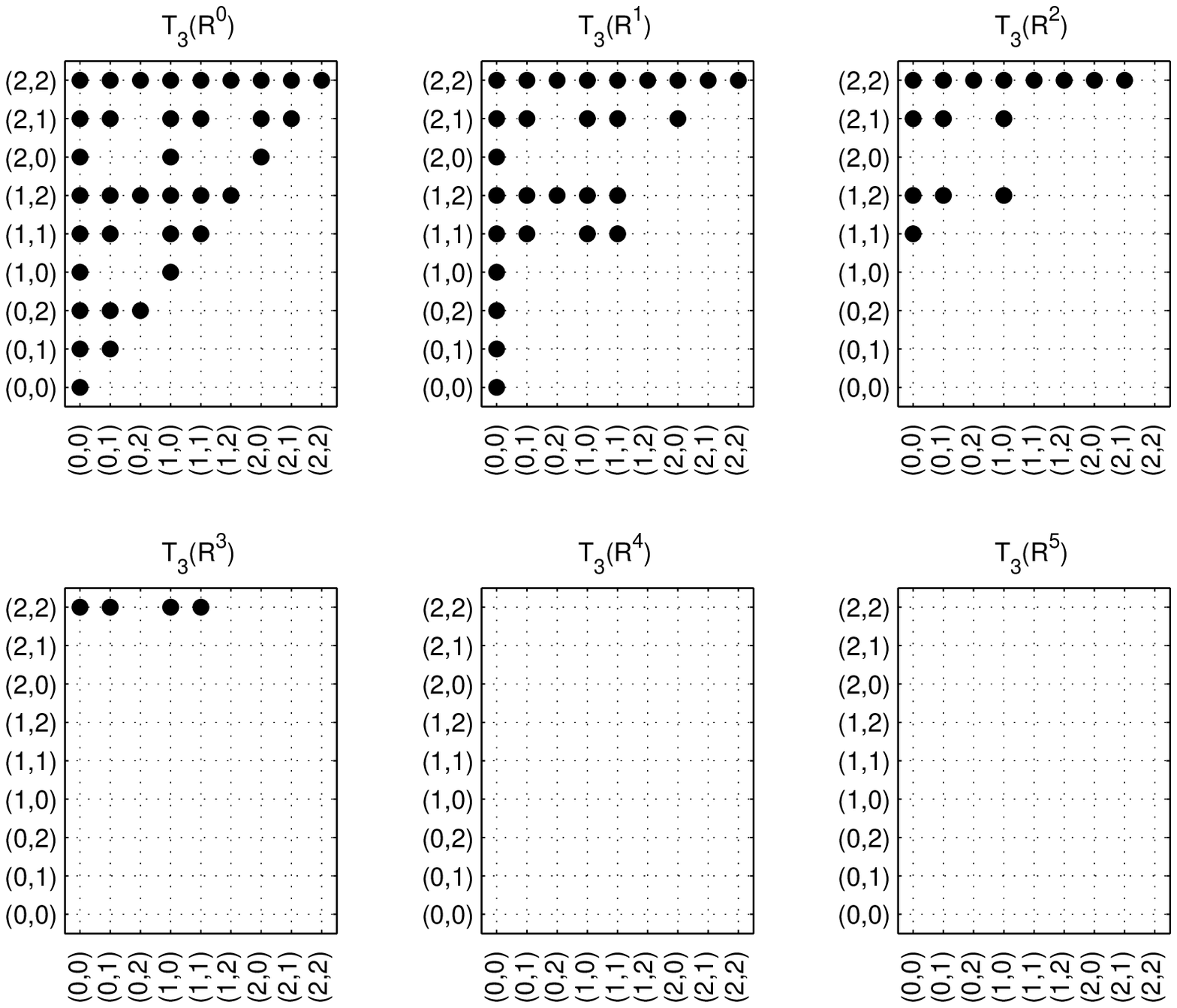}
 \caption{\label{fig:LoadBalancingCoordLessThan} Truncated subrelation algorithm applied to $\RCoord$.}
\end{figure}

Let us next study another order on $\Z_+^2$, defined by
\[
 \RSum = \left\{(x,y) \in \Z_+^2 \times \Z_+^2: x_1 + y_1, \ x_2 + y_2 \right\}.
\]
Figure~\ref{fig:LoadBalancingSumLessThan} illustrates five iterations of the subrelation algorithm
(computed using~\cite{leskelaSW}) truncated to $S_3$ applied to $R^0 = \RSum$ with the parameters
$\lambda_1,\lambda_2,\mu_1,\mu_2$ all equal to one. The observation that $T_3(R^k)$ remains
unchanged from $k=1$ onwards suggests that some nontrivial subrelation of $\RSum$ might be
stochastically preserved by $(\XLB,X)$.
\begin{figure}[h]
 \centering
 \includegraphics[width=.8\textwidth]{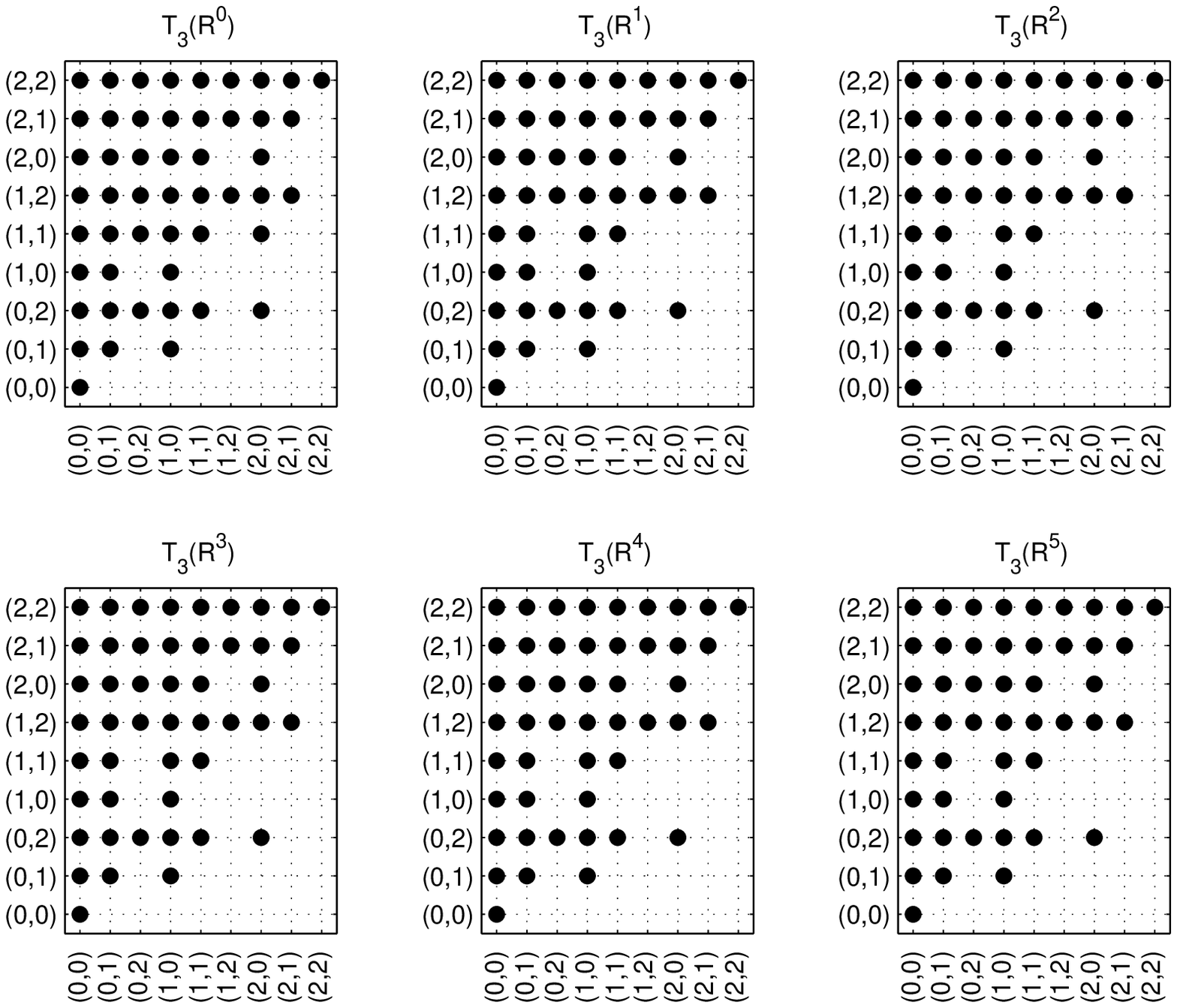}
 \caption{\label{fig:LoadBalancingSumLessThan} Truncated subrelation algorithm applied to $\RSum$.}
\end{figure}
Indeed, it has been analytically shown~\cite{leskelaNow} that whenever $\mu_1=\mu_2$, the
untruncated subrelation algorithm converges to the relation
\[
 R^* = \left\{(x,y): x_1+x_2 \le y_1+y_2 \ \text{and} \ x_1 \vee x_2 \le y_1 \vee y_2 \right\}.
\]
As a consequence of Theorem~\ref{the:maximal}, the pair $(\XLB,X)$ stochastically preserves the
relation $R^*$, which may be identified as the weak majorization order on $\Z_+^2$
\cite{marshall1979}. Especially,
\begin{align*}
 \XLB_1(t) + \XLB_2(t) \, &\lest \, X_1(t) + X_2(t), \\
 \XLB_1(t) \vee \XLB_2(t) \, &\lest \, X_1(t) \vee X_2(t),
\end{align*}
for all $t$, whenever the initial states $\XLB(0)$ and $X(0)$ satisfy the same inequalities.

\section{Conclusions}
\label{sec:Conclusions}

This paper presented computational methods for verifying stochastic relations and finding
relation-invariant couplings of continuous-time Markov processes on finite and countably infinite
state spaces. A key point of the paper is that the stochastic relationship between two probability
measures can be quickly numerically checked, if one of the measures has small support
(Theorem~\ref{the:StochasticRelationFinite}). This result allows the development of a truncation
approach for finding relations stochastically preserved by pairs of Markov processes with locally
bounded jumps. The truncated subrelation algorithm (Algorithm~\ref{alg:SubrelationTruncated})
allows to numerically find candidates for a subrelation of a given relation that is stochastically
preserved by a pair of Markov processes. It remains an interesting open problem for future
research to study how the truncated subrelation algorithm behaves for structured Markov processes
with for example shift-invariant transition rate matrices.

\section{Acknowledgments}
This work has been supported by the Academy of Finland.

\bibliographystyle{abbrv}
\bibliography{lslReferences}

\end{document}